\theoremstyle{plain}
\newtheorem{thm}{Theorem}[section]
\newtheorem{cor}[thm]{Corollary}
\newtheorem{lem}[thm]{Lemma}
\newtheorem{prop}[thm]{Proposition}
\theoremstyle{definition}
\newtheorem{defn}{Definition}[section]
\newtheorem{rem}[defn]{Remark}
\numberwithin{equation}{section}
\newcommand{\PreserveBackslash}[1]{\let\temp=\\#1\let\\=\temp}
\newcolumntype{C}[1]{>{\centering\arraybackslash}p{#1}}
\date{}
\begin{document}
    \begin{center}
        {\LARGE \textbf{On the Sum of Additive Characters over Finite Fields}}\\
        {\small Maithri K.$^{1}$, Vadiraja Bhatta G. R.$^{1,*}$, Indira K. P.$^{1}$ \\
        $^{1}$Department of Mathematics, Manipal Institute of Technology, \\
        Manipal Academy of Higher Education, Manipal, India.}
    \end{center}
    \begin{abstract}
        In this paper, we study the sum of additive characters over finite fields, with a focus on those of specified $\mathbb{F}_q$-Order. We establish a general formula for these character sums, providing an additive analogue to classical results previously known for multiplicative characters. As an application, we derive a M\"obius function $\mu(g)$ for polynomials $g \in \mathbb{F}_q[x]$, analogous to the integer M\"obius function $\mu(n)$, and develop a characteristic function for $k$-normal elements. We also generalize several classical identities from the integer setting to the polynomial setting, highlighting the structural parallels between these two domains.
    \end{abstract}
    \textbf{MSC Classifications:} 11T30, 11T06, 11T23, 12E20.\\
    \textbf{Keywords:} $r$-primitive elements, $k$-normal elements, Multiplicative character, Additive characters.
    \section{Introduction}

    Let $\mathbb{F}_{q^m}$ be a finite field of characteristic $p$, where $p$ is a prime. This field can be viewed as an $m$-dimensional vector space over its subfield $\mathbb{F}_q$. An element $\alpha \in \mathbb{F}_{q^m}$ is called \emph{normal} over $\mathbb{F}_q$ if $\alpha$ together with all its conjugates forms a basis of $\mathbb{F}_{q^m}$ over $\mathbb{F}_q$. Such a basis is referred to as a \emph{normal basis}.

    The multiplicative group $\mathbb{F}_{q^m}^*$ is cyclic of order $q^m - 1$, and any element which generates this group is called a \emph{primitive element}. Primitive elements exist in every finite field. Both primitive and normal elements are of significant interest in applications, particularly in cryptography. Elements that are simultaneously primitive and normal, referred to as \emph{primitive normal elements}, are especially valued for their algebraic structure and efficiency in computation.

    The existence of primitive normal elements has been extensively studied. Carlitz first proved their existence for sufficiently large values of $q^m$~\cite{Carlitz1952Prim,CarlitzPrimitiveRootssomeproblem}, and Davenport extended the result to the case where $q$ is prime~\cite{Davenport_existance}. A complete proof covering all cases was later given by Lenstra and Schoof in 1987~\cite{LenstraPriminormal}.

    Expanding on these ideas, Kapetanakis and Reis introduced the notion of \emph{$r$-primitive elements}, which are elements of $\mathbb{F}_{q^m}$ with multiplicative order $\frac{q^m - 1}{r}$~\cite{Reis2019Variation}. In parallel, Huczynska et al.\ introduced \emph{$k$-normal elements}, generalizing the classical notion of normality~\cite{Mullen2013Existance}. These $k$-normal elements when considered together with their conjugates forms the basis of $q$-modulus of dimension $m-k$. Later studies looked at whether $r$-primitive and $k$-normal elements exist and what properties they have, for different choices of $r$ and $k$~\cite{Lucas2,Neumann}.

    Several studies have further explored related aspects, such as the behavior of inverses of these elements~\cite{RANI2_inverse}, elements with prescribed norm and trace~\cite{RANI1_norm&trace}, and the existence of such elements occurring in pairs~\cite{Aguirre2_pairs_rk} or forming arithmetic progressions~\cite{Aguirre3_AP_rk,Laishram_AP}. These works underscore the depth of the topic and reveal its close ties to the theory of additive and multiplicative characters, which serve as essential tools in analyzing the arithmetic structure of finite fields.

    The study of $r$-primitive elements typically involves multiplicative characters, whereas $k$-normal elements are often approached using additive characters. A key quantity that arises in the construction of characteristic functions particularly for $g$-free elements and those with a prescribed $\mathbb{F}_q$-Order is the sum $\displaystyle \sum_g \chi(\alpha)$, where $\chi$ ranges over additive characters of $\mathbb{F}_q$-Order $g$. This sum plays a central role in numerous works, including~\cite{Mullen2013Existance, Reis2019Variation, RANI2_inverse, Aguirre2_pairs_rk, RANI1_norm&trace}, and serves as the basis for the present study. The foundational analysis of such sums can be found in~\cite{Carlitz1954Sum}.

    We focus on the analysis of this sum of additive characters. As preparation, we adopt the following notation for polynomials over $\mathbb{F}_q[x]$: the Euler phi function $\phi(f)$ counts the number of polynomials of degree less than $\deg(f)$ and coprime to $f$; the Möbius function $\mu(f)$ is defined as $(-1)^r$ if $f$ is the product of $r$ distinct irreducible polynomials and zero otherwise; $W(f)$ denotes the number of square-free divisors of $f$, which equals $2^{w(f)}$, where $w(f)$ is the number of irreducible factors of $f$.

    The structure of the paper is as follows. In Section~2, we present the necessary background and preliminaries. Section~3 establishes a general result on the sum of additive characters with a given $\mathbb{F}_q$-Order. As a consequence, we obtain a formula for the polynomial Möbius function $\mu(g)$, which parallels the classical Möbius function $\mu(n)$ for integers. In Section~4, we apply this formula to construct a characteristic function for elements with a specific $\mathbb{F}_q$-Order, and thereby for $k$-normal elements. 

    \section{Preliminary Definitions}
    
    Consider a field $\mathbb{F}_{q^m}$ over $\mathbb{F}_q$. R.~Lidl and H.~Niederreiter~\cite{LIDL} provided an equivalent condition for noraml element: an element $\alpha \in \mathbb{F}_{q^m}$ is normal if and only if the polynomials $x^m - 1$ and $\alpha x^{m-1} + \alpha^q x^{m-2} + \cdots + \alpha^{q^{m-2}} x + \alpha^{q^{m-1}}$ are coprime.

    Motivated by this characterization, S.~Huczynska et al.~\cite{Mullen2013Existance} introduced a broader notion called $k$-normal elements.
    \begin{defn}\cite{Mullen2013Existance} 
    
        Let $\alpha \in \mathbb{F}_{q^m}$ and $g_\alpha(x)$ be a polynomial $\sum\limits_{i=0}^{m-1} \alpha^{q^i} x^{m-1-i} \in \mathbb{F}_{q^m}[x]$. If gcd$\left(x^m-1, g_\alpha\left(x\right) \right)$ over $\mathbb{F}_{q^m}$ has degree $k$ (where $0 \leq k \leq m-1$ ), then $\alpha$ is a $k$-normal element of $\mathbb{F}_{q^m}$ over $\mathbb{F}_q$.
    \end{defn}

    For any polynomial $l(x) = \displaystyle \sum_{i=0}^n a_i x^i$ over the field $\mathbb{F}_q$, the notation $l \circ x$ is used to denote the associated \emph{linearized polynomial} $L(x) = \displaystyle \sum_{i=0}^n a_i x^{q^i}$. Based on this concept, the $\mathbb{F}_q$-Order of an element is defined as the unique monic polynomial $f$ of least degree such that $f \circ \alpha = 0$.

    Analogous to the $\mathbb{F}_q$-Order of elements, the $\mathbb{F}_q$-Order of additive characters was introduced by Reis~\cite{Reis2020additive}. 
    \begin{defn}\cite{Reis2020additive}
        The $\mathbb{F}_q$-Order of and additive character $\chi \in \displaystyle \widehat{\mathbb{F}_{q^m}}$ is the unique monic polynomial in $f \in \mathbb{F}_q[x]$ of least degree such that $f \circ \chi = \chi_0$, that is $\chi(f\circ \gamma ) = 1, ~\forall \gamma \in \mathbb{F}_{q^m}$. 
    \end{defn}

    Carlitz made several foundational contributions to this area in the 1950s. Some of his key results that are relevant to the present study are recalled below. In his work, Carlitz used the notation $c_a(\xi)$ to denote the sum $\sum_{a(x)} \chi(\xi)$, where $a$ is a polynomial over $\mathbb{F}_q$ dividing $x^m - 1$, and the sum is taken over all additive characters of $\mathbb{F}_q$-Order $a$.

    \begin{lem}\cite{Carlitz1954Sum}\label{CarlitzLemrelativelyprimepol}
        Let $a_1\left|x^m-1 \text{ and } a_2\right| x^m-1,\left(a_1, a_2\right)=1$. Then, $$c_{a_1 a_2}(\xi)=c_{a_1}(\xi) c_{a_2}(\xi).$$ 
        \end{lem}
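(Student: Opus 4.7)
The plan is to reduce the identity to a Chinese Remainder style decomposition of $\mathbb{F}_{q^m}$ viewed as an $\mathbb{F}_q[x]$-module under the Carlitz-type action $f\circ\xi$. Concretely, the isomorphism $\beta\mapsto\chi_\beta$ between $\mathbb{F}_{q^m}$ and its dual $\widehat{\mathbb{F}_{q^m}}$ intertwines the two notions of $\mathbb{F}_q$-Order, so I can rewrite
\[
c_a(\xi)=\sum_{\mathrm{Ord}(\beta)=a}\chi_\beta(\xi),
\]
and exploit the fact that $\chi_{\beta_1+\beta_2}=\chi_{\beta_1}\chi_{\beta_2}$.

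First I would fix $a=a_1a_2$ with $(a_1,a_2)=1$ and establish the decomposition
\[
\{\beta\in\mathbb{F}_{q^m}:\mathrm{Ord}(\beta)\mid a\}=\{\beta:\mathrm{Ord}(\beta)\mid a_1\}\oplus\{\beta:\mathrm{Ord}(\beta)\mid a_2\}.
\]
Since $\gcd(a_1,a_2)=1$ in $\mathbb{F}_q[x]$, Bezout yields $u_1,u_2\in\mathbb{F}_q[x]$ with $u_1a_1+u_2a_2=1$. For any $\beta$ annihilated by $a_1a_2$, writing $\beta=(u_1a_1)\circ\beta+(u_2a_2)\circ\beta$ gives a decomposition in which the first summand is annihilated by $a_2$ and the second by $a_1$, and the intersection of the two summands is trivial because an element killed by both $a_1$ and $a_2$ is killed by $\gcd(a_1,a_2)=1$.

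Next I would refine this to an equality of $\mathbb{F}_q$-Orders: if $\beta=\beta_1+\beta_2$ with $\mathrm{Ord}(\beta_i)\mid a_i$, then $\mathrm{Ord}(\beta)=\mathrm{lcm}(\mathrm{Ord}(\beta_1),\mathrm{Ord}(\beta_2))=\mathrm{Ord}(\beta_1)\cdot\mathrm{Ord}(\beta_2)$ by coprimality, so $\mathrm{Ord}(\beta)=a_1a_2$ if and only if $\mathrm{Ord}(\beta_1)=a_1$ and $\mathrm{Ord}(\beta_2)=a_2$. This sets up a bijection between the indexing set of $c_{a_1a_2}$ and the Cartesian product of the indexing sets of $c_{a_1}$ and $c_{a_2}$.

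Finally I would put it together:
\[
c_{a_1a_2}(\xi)=\!\!\sum_{\mathrm{Ord}(\beta)=a_1a_2}\!\!\chi_\beta(\xi)=\!\!\sum_{\substack{\mathrm{Ord}(\beta_1)=a_1\\ \mathrm{Ord}(\beta_2)=a_2}}\!\!\chi_{\beta_1}(\xi)\chi_{\beta_2}(\xi)=c_{a_1}(\xi)c_{a_2}(\xi),
\]
using additivity of $\chi_\bullet$ in the subscript in the middle step. I expect the main obstacle to be the decomposition of orders in the third paragraph; in particular, justifying that the $\mathrm{Ord}$ of a sum of two elements with coprime orders is exactly the product, which requires that $\mathrm{Ord}(\beta_i)$ be the precise annihilator of $\beta_i$ and not just a divisor of $a_i$. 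Once this is secured, the character identity itself is immediate from the homomorphism property $\chi_{\beta_1+\beta_2}=\chi_{\beta_1}\chi_{\beta_2}$.
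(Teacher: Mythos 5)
The paper does not prove this lemma at all --- it is quoted verbatim from Carlitz --- so there is no internal proof to compare against; your argument has to stand on its own, and in substance it does. The Bezout/CRT decomposition of the $a_1a_2$-torsion into the direct sum of the $a_1$- and $a_2$-torsion, the computation $\mathrm{Ord}(\beta_1+\beta_2)=\mathrm{Ord}(\beta_1)\,\mathrm{Ord}(\beta_2)$ for coprime annihilators (your ``conversely'' step, $f\circ\beta_1=-f\circ\beta_2$ lying in both submodules and hence vanishing, is exactly the point that needs care and you identified it), and the final factorization via $\chi_{\beta_1+\beta_2}=\chi_{\beta_1}\chi_{\beta_2}$ are all correct and are essentially the argument underlying Carlitz's own setup.

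The one imprecision is the opening claim that $\beta\mapsto\chi_\beta$ ``intertwines the two notions of $\mathbb{F}_q$-Order.'' With the usual definitions $\chi_\beta(\xi)=\chi_1(\beta\xi)$ and $(f\circ\chi)(\gamma)=\chi(f\circ\gamma)$, a trace computation gives $f\circ\chi_\beta=\chi_{\tilde f\circ\beta}$ with $\tilde f(x)\equiv f(x^{-1})\pmod{x^m-1}$, so the $\mathbb{F}_q$-order of $\chi_\beta$ is the monic \emph{reciprocal} of $\mathrm{Ord}(\beta)$, not $\mathrm{Ord}(\beta)$ itself; this is why the paper's Definition of $c'_a$ indexes the characters of order dividing $a$ by the roots of $A^*$, the linearized associate of $a^*$, rather than of $A$. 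This does not damage your proof: reciprocation on divisors of $x^m-1$ is a degree-, coprimality- and multiplicativity-preserving bijection, so you may simply sum over $\beta$ with $\mathrm{Ord}(\beta)=a_1^*a_2^*$ and the same bookkeeping gives $c_{a_1a_2}=c_{a_1}c_{a_2}$. Alternatively, you can avoid the parametrization by $\beta$ altogether: $\widehat{\mathbb{F}_{q^m}}$ is itself an $\mathbb{F}_q[x]/(x^m-1)$-module under $f\circ\chi$ (with group operation pointwise multiplication), and your Bezout argument applied there gives, for $\chi$ of order $a_1a_2$, the decomposition $\chi=\bigl((u_1a_1)\circ\chi\bigr)\bigl((u_2a_2)\circ\chi\bigr)$ with the two factors of exact orders $a_2$ and $a_1$; evaluating at $\xi$ and summing yields the identity directly, with no adjoint or reciprocal to track.
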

    \begin{lem} \cite{Carlitz1954Sum}\label{CarlitzLemmairpol}
        Let $u$ be an irreducible polynomial $\in \mathbb{F}_q[x], ~u^e \mid x^m-1$. Let $\xi$ belong to $K(x)$, where $K(x)$ corresponds to $k(x)$; also put $x^m-1=k l$. Then we have $$c_{u^e}(\xi)= \begin{cases}|u|^e-|u|^{e-1} & \left(u^e \mid l\right) \\ -|u|^{e-1} & \left(u^{e-1} \mid l, u^e\nmid l\right) \\ 0 & \left(u^{e-1}\nmid l\right)\end{cases}$$
        where $|u| = q^{deg(u(x))}$.
    \end{lem}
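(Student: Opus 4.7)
The plan is to use polynomial Möbius inversion to reduce $c_{u^e}(\xi)$ to simpler $c'$-sums, and then evaluate the latter by additive-character orthogonality on an $\mathbb{F}_q$-subspace of $\mathbb{F}_{q^m}$.

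First, applying Möbius inversion to the defining relation $c'_a(\xi) = \sum_{d \mid a} c_d(\xi)$ over the polynomial divisor poset yields
$$c_a(\xi) \;=\; \sum_{d \mid a} \mu(a/d)\, c'_d(\xi).$$
For $a = u^e$ with $u$ irreducible, the only nonvanishing Möbius values among the $\mu(u^{e-j})$ are $\mu(1) = 1$ and $\mu(u) = -1$, so the sum collapses to the two-term identity
$$c_{u^e}(\xi) \;=\; c'_{u^e}(\xi) \;-\; c'_{u^{e-1}}(\xi).$$

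Next, I would evaluate each $c'_{u^j}(\xi)$ via the description $c'_a(\xi) = \sum_{A^*(\beta)=0} \chi_\beta(\xi)$ from the preamble. This is a sum of additive characters indexed by an $\mathbb{F}_q$-subspace of $\mathbb{F}_{q^m}$, so by orthogonality it takes one of only two values: the full size of the relevant subspace when $\xi$ annihilates it in the additive pairing, and $0$ otherwise. The key claim to establish is that, under the hypothesis $\xi \in K$ (so that $\mathrm{Ord}(\xi) = k$, with $x^m - 1 = kl$), this dichotomy becomes $c'_{u^j}(\xi) = |u|^j$ whenever $u^j \mid l$, and $c'_{u^j}(\xi) = 0$ otherwise. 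To prove this, one identifies the annihilator of the group of additive characters of $\mathbb{F}_q$-Order dividing $u^j$ with the set of elements of $\mathbb{F}_q$-Order dividing $(x^m-1)/u^j$; then $\xi$ of $\mathbb{F}_q$-Order $k$ lies in this annihilator if and only if $k \mid (x^m-1)/u^j$, equivalently $u^j \mid l$.

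Combining these ingredients, the three cases of the lemma follow by a case analysis on the $u$-adic valuation of $l$: if $u^e \mid l$, both summands attain their full values and we get $|u|^e - |u|^{e-1}$; if $u^{e-1} \mid l$ but $u^e \nmid l$, only the $c'_{u^{e-1}}(\xi)$ term survives, giving $-|u|^{e-1}$; and if $u^{e-1} \nmid l$, both terms vanish and the sum is $0$. The main obstacle is the middle step, namely rigorously establishing the annihilator-divisibility correspondence. This requires a careful use of the Pontryagin duality between $\mathbb{F}_{q^m}$ and its additive character group, compatible with the $\mathbb{F}_q[x]$-module structure induced by the Frobenius action, together with the identification of characters of prescribed $\mathbb{F}_q$-Order with the root set of the appropriate linearized polynomial, as furnished by the conventions laid out earlier in the paper. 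Once this correspondence is in hand, the remainder of the argument is a short manipulation of polynomial divisibility.
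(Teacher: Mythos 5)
This lemma is stated in the paper as a quotation from Carlitz (1954) and is not proved there, so there is no internal proof to compare against; your argument should be judged against the classical one, and it is essentially that argument, correctly reconstructed. From $c'_a(\xi)=\sum_{d\mid a}c_d(\xi)$ you get $c_{u^e}(\xi)=c'_{u^e}(\xi)-c'_{u^{e-1}}(\xi)$ (Möbius inversion is more than needed for a prime power, but harmless), and each $c'_{u^j}(\xi)$ is a sum of $\chi(\xi)$ over the subgroup of additive characters of $\mathbb{F}_q$-Order dividing $u^j$, hence equals the subgroup size $|u|^j$ when all of them are trivial at $\xi$ and $0$ otherwise; identifying the common kernel of that subgroup with the elements of $\mathbb{F}_q$-Order dividing $(x^m-1)/u^j$ turns the triviality condition into $k\mid (x^m-1)/u^j$, i.e.\ $u^j\mid l$, and the three cases follow exactly as you say. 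Two remarks. First, you correctly interpret ``$\xi$ belongs to $K(x)$'' as $\mathrm{Ord}(\xi)=k$ exactly (Carlitz's ``belongs to'' means exact order); this reading is essential, since with only $\mathrm{Ord}(\xi)\mid k$ the case distinction is false (e.g.\ $\xi=0$ yields $|u|^e-|u|^{e-1}$ regardless of $l$). Second, the annihilator--divisibility correspondence that you flag as the main obstacle is indeed where all the work sits, and you leave it as a plan rather than a proof; but the plan is the right one: characters killed by $u^j$ are precisely those trivial on the image of the linearized polynomial of $u^j$, which by cyclicity of $\mathbb{F}_{q^m}$ as an $\mathbb{F}_q[x]$-module is the kernel of the linearized polynomial of $(x^m-1)/u^j$, and the adjoint identity $\mathrm{Tr}\bigl(\beta\,(f\circ\gamma)\bigr)=\mathrm{Tr}\bigl((\hat f\circ\beta)\,\gamma\bigr)$ supplies the duality you invoke. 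Once that identity is written out, the remainder is the short divisibility computation you describe, so the proposal is sound and matches the standard (Carlitz-style) proof of the quoted lemma.
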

    \section{Sum of additive characters}
    In this section, we study sums of additive characters over finite fields. As mentioned earlier, we first examine these sums in the context of integers, and subsequently extend the analysis to the polynomial setting.

    It is well known that the sum of the $k$th powers of the primitive $n$th roots of unity is given by $\mu(g) \frac{\phi(n)}{\phi(g)}$, where $g = \frac{n}{\gcd(k, n)}$. This result is useful for working with multiplicative character sums.

    \begin{thm}
        If $\alpha$ is primitive element in the field $\mathbb{F}_{q^m}$, then for a divisor $d$ of $q^m-1$, $$\displaystyle \sum_{(d)} \psi_d(\alpha^r) = \mu \left( \dfrac{d}{gcd(d,r)} \right) \dfrac{\phi(d)}{\phi\left( \frac{d}{gcd(d,r)} \right)},$$ where the sum is taken over all multiplicative characters $\psi$ of $\mathbb{F}_{q^m}$ of order $d$.
    \end{thm}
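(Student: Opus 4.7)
\medskip

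\noindent\textbf{Proof plan.} The goal is to reduce the character sum to a sum of powers of roots of unity and then invoke Theorem~\ref{Saha} (Saha). The key input is that $\alpha$ is primitive, so every multiplicative character $\psi$ of $\mathbb{F}_{q^m}^{*}$ is uniquely determined by the value $\psi(\alpha)$, which is a $(q^m-1)$-th root of unity.

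\medskip

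First, I would make precise the bijection
\[
\{\psi:\mathbb{F}_{q^m}^{*}\to\mathbb{C}^{*}\ \text{of multiplicative order } d\}
\ \longleftrightarrow\ \{\text{primitive } d\text{-th roots of unity}\},
\]
given by $\psi\mapsto\psi(\alpha)$. Since $\alpha$ generates the cyclic group $\mathbb{F}_{q^m}^{*}$, the multiplicative order of $\psi$ equals the order of $\psi(\alpha)$ as a root of unity; hence $\psi$ has order $d$ iff $\psi(\alpha)$ is a primitive $d$-th root of unity. Counting both sides gives $\phi(d)$ elements, so the map is a bijection.

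\medskip

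Next, I would rewrite the character sum using this correspondence. For each character $\psi_d$ of order $d$, write $\psi_d(\alpha)=\zeta$ with $\zeta$ a primitive $d$-th root of unity; then
\[
\psi_d(\alpha^{r})=\psi_d(\alpha)^{r}=\zeta^{r}.
\]
As $\psi_d$ ranges over characters of order $d$, the corresponding $\zeta$ ranges over all primitive $d$-th roots of unity, whence
\[
\sum_{(d)}\psi_d(\alpha^{r})\;=\;\sum_{\zeta}\zeta^{r},
\]
where the right-hand sum is taken over all primitive $d$-th roots of unity.

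\medskip

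Finally, I would apply Theorem~\ref{Saha} directly with $n=d$ and the exponent $k=r$: the sum of $r$-th powers of primitive $d$-th roots of unity equals $\mu(g)\,\phi(d)/\phi(g)$ with $g=d/\gcd(d,r)$, yielding the claimed formula. There is no real obstacle in this argument; the only point needing a careful line is the order--preservation step in the bijection $\psi\mapsto\psi(\alpha)$, and the rest is a direct substitution into Saha's identity.
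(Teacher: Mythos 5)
Your proposal is correct and follows essentially the same route as the paper: identify characters of order $d$ with primitive $d$-th roots of unity via $\psi\mapsto\psi(\alpha)$ and then apply Theorem~\ref{Saha} with $n=d$, $k=r$. In fact you make explicit the bijection and order-preservation step that the paper's proof only states implicitly, so nothing is missing.
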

    \begin{proof}
        Let $\alpha$ be a primitive element of $\mathbb{F}_{q^m}$, so that every element of the multiplicative group $\mathbb{F}_{q^m}^*$ can be expressed as a power of $\alpha$. Each multiplicative character $\psi$ of order $d$ satisfies $\psi(\alpha) = \zeta$, where $\zeta$ is a primitive $d$-th root of unity.\\
        Then, for each such character, $\psi(\alpha^r) = \zeta^r$. Hence, the sum  $\displaystyle \sum_{(d)} \psi_d(\alpha^r)$, corresponds to the sum of the $r$-th powers of all primitive $d$-th roots of unity.
        By the formula for sum of $k$-th powers of the primitive $n$-th roots of unity, it follows that, 
         \[\sum_{(d)} \psi_d(\alpha^r) = \mu\left( \frac{d}{\gcd(d, r)} \right) \frac{\phi(d)}{\phi\left( \frac{d}{\gcd(d, r)} \right)}.\]
    \end{proof}

    An analogous result can be formulated in the setting of additive characters. In the multiplicative case, a primitive element is used, as every nonzero element of the field can be expressed as a power of such an element. In contrast, the additive analogue relies on the notion of a normal element, since any element $\beta \in \mathbb{F}_{q^m}$ can be written as $f \circ \alpha$ for some normal element $\alpha$.

    Lemma~\ref{CarlitzLemmairpol} provides a formula for the sum of additive characters evaluated at a fixed element of the field, where the characters range over all those of $\mathbb{F}_q$-Order equal to a power of an irreducible polynomial. The following result gives a more general form of this identity.

    \begin{thm}\label{Sumaddi}
        Let $x^m-1 = f_1(x)f_2(x), ~g(x) \vert x^m-1$ and let $\alpha \in \mathbb{F}_{q^m}$ with $\mathbb{F}_q$-Order $f_1$, then
        $\displaystyle\sum_{g} \chi (\alpha) = \mu \left (d \right)\frac{\phi(g)}{\phi\left(d\right)}$, where $d(x) = \dfrac{g(x)}{gcd \left(g(x),f_2\right)}$ and the summation runs over all the additive characters of $\mathbb{F}_q$-Order $g$. 
    \end{thm}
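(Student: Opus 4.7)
The plan is to model the proof on Saha's theorem used for the multiplicative analogue immediately above, but built from the two Carlitz lemmas already stated. Multiplicativity of $c_g$ over coprime factors (Lemma~\ref{CarlitzLemrelativelyprimepol}) reduces the claim to the case $g = u^e$ for a single irreducible $u$, and Lemma~\ref{CarlitzLemmairpol} then evaluates each prime-power sum. The key observation is that the hypothesis ``$\alpha$ has $\mathbb{F}_q$-Order $f_1$'' places $\alpha$ in the kernel of the linearized polynomial associated to $f_1$, so one may invoke Lemma~\ref{CarlitzLemmairpol} with the decomposition $x^m-1 = k\cdot l$ specialized to $k = f_1$ and $l = f_2$.

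Concretely, I would write $g = \prod_{i=1}^{s} u_i^{e_i}$ for distinct monic irreducibles $u_i$, and let $a_i$ denote the $u_i$-adic valuation of $f_2$, so that $d_i := u_i^{e_i}/\gcd(u_i^{e_i},f_2) = u_i^{e_i-\min(e_i,a_i)}$. Applying Lemma~\ref{CarlitzLemmairpol} with $l = f_2$, I would check by direct comparison that $c_{u_i^{e_i}}(\alpha) = \mu(d_i)\,\phi(u_i^{e_i})/\phi(d_i)$ in each of the three regimes: $a_i \geq e_i$ (so $d_i = 1$, $\mu(d_i) = 1$, matching $|u_i|^{e_i} - |u_i|^{e_i-1} = \phi(u_i^{e_i})$); $a_i = e_i-1$ (so $d_i = u_i$, $\mu(d_i) = -1$, matching $-|u_i|^{e_i-1} = -\phi(u_i^{e_i})/(|u_i|-1)$); and $a_i \leq e_i-2$ (so $u_i^2 \mid d_i$, forcing $\mu(d_i) = 0$, matching the vanishing of $c_{u_i^{e_i}}(\alpha)$).

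To finish, I would observe that $\gcd(g,f_2) = \prod_i u_i^{\min(e_i,a_i)}$, so $d = \prod_i d_i$ is itself a product of pairwise coprime prime-powers. Since $\mu$ and $\phi$ are multiplicative on coprime arguments and $\phi(g) = \prod_i \phi(u_i^{e_i})$, multiplying the per-prime identities via Lemma~\ref{CarlitzLemrelativelyprimepol} produces exactly $\mu(d)\,\phi(g)/\phi(d)$, which is the claimed formula.

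The main obstacle is purely bookkeeping: correctly identifying the pair $(k,l)$ in the hypothesis of Lemma~\ref{CarlitzLemmairpol} with $(f_1,f_2)$, and lining up its three cases with the three ranges of $a_i$ that control $d_i$. No deeper input beyond the two Carlitz lemmas appears to be needed, so once this correspondence is set up the multiplicative assembly across the irreducible factors of $g$ is routine.
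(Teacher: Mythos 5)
Your proposal is correct and follows essentially the same route as the paper: reduce via Lemma~\ref{CarlitzLemrelativelyprimepol} to prime-power factors of $g$, then evaluate each factor with Lemma~\ref{CarlitzLemmairpol} specialized to $k=f_1$, $l=f_2$, and reassemble using multiplicativity of $\mu$ and $\phi$. The only difference is organizational — you verify the identity uniformly per irreducible factor and multiply, whereas the paper splits into three global cases according to whether $d=1$, $d$ is a nontrivial squarefree product, or $d$ has a square factor — but the underlying computation is identical.
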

    \begin{proof}
        Let \(g = g_1^{e_1}(x)g_2^{e_2}(x)\dots g_k^{e_k}(x)\). Then
        \[
        d = \frac{g}{\gcd(g, f_2)} = \frac{g_1^{e_1}g_2^{e_2}\dots g_k^{e_k}}{\gcd(g_1^{e_1}g_2^{e_2}\dots g_k^{e_k}, f_2)} = \frac{g_1^{e_1}g_2^{e_2}\dots g_k^{e_k}}{g_1^{l_1}g_2^{l_2}\dots g_k^{l_k}} = g_1^{e_1-l_1}g_2^{e_2-l_2}\dots g_k^{e_k-l_k}.\]
        Moreover, by Lemma~\ref{CarlitzLemrelativelyprimepol}, we have
        \begin{equation}
            \sum_{g} \chi(\alpha) = \sum_{g_1^{e_1}} \chi(\alpha) \sum_{g_2^{e_2}} \chi(\alpha) \dots \sum_{g_k^{e_k}} \chi(\alpha).
            \label{multiplicity_of_sum}
        \end{equation}

        \textbf{Case 1:} \(d = 1\)

        In this case, \(g\) divides \(f_2\), that is, \(g_i^{e_i} \mid f_2\) for all \(1 \leq i \leq k\). By Lemma~\ref{CarlitzLemmairpol}, it follows that
        \[
        \sum_{g_i^{e_i}} \chi(\alpha) = |g_i|^{e_i} - |g_i|^{e_i-1} = \phi(g_i^{e_i}), \quad \text{for all } 1 \leq i \leq k.
        \]
        Substituting into equation~\eqref{multiplicity_of_sum}, we obtain
        \[
        \sum_{g} \chi(\alpha) = \prod_{i=1}^{k} \phi(g_i^{e_i}) = \phi(g) = \mu(d)\frac{\phi(g)}{\phi(d)}.
        \]

        \textbf{Case 2:} \(d = g_1g_2\dots g_r\)

        By Lemma~\ref{CarlitzLemmairpol}, we have
        \[
        \sum_{g_i^{e_i}} \chi(\alpha) = -|g_i|^{e_i-1} = \mu(g_i) \frac{\phi(g_i^{e_i})}{\phi(g_i)} \quad \text{for } 1 \leq i \leq r,
        \]
        and
        \[
        \sum_{g_i^{e_i}} \chi(\alpha) = \phi(g_i^{e_i}) \quad \text{for } \, r+1 \leq i \leq k. 
        \]
        Substituting these into equation~\eqref{multiplicity_of_sum}, it follows that
        \[
        \sum_{g} \chi(\alpha) = \prod_{i=1}^{r} \mu(g_i) \frac{\phi(g_i^{e_i})}{\phi(g_i)} \prod_{i=r+1}^{k} \phi(g_i^{e_i}) = \mu(d) \frac{\phi(g)}{\phi(d)}.
        \]

        \textbf{Case 3:} Suppose there exist an square factor in d.

        Let $g_i^2 \vert d$ for some $i$, $1\leq i \leq k$, then by lemma~\ref{CarlitzLemmairpol}, we have
        \[
        \sum_{g_i^{e_i}} \chi(\alpha) = 0.
        \]
        Substituting this into equation~\eqref{multiplicity_of_sum}, it follows that
        \[\sum_{g} \chi(\alpha) = 0 = \mu(d) = \mu(d) \frac{\phi(g)}{\phi(d)}.\]
    \end{proof}
    \begin{cor}
        Let $\alpha$ be a normal element in the field $\mathbb{F}_{q^m}$, $f(x) \in \mathbb{F}_q[x]$ and $g(x) \mid x^m-1$, then $\displaystyle \sum_g \chi(f\circ \alpha) = \mu(d) \frac{\phi(g)}{\phi(d)}$, where $d=\dfrac{g}{\text{gcd}(g,f)}$.
    \end{cor}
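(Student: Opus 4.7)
The plan is to reduce the corollary to Theorem~\ref{Sumaddi} by identifying the $\mathbb{F}_q$-Order of $\beta := f \circ \alpha$ and then massaging the gcd so that the $d$ in the corollary matches the $d$ in the theorem.

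First I would observe that since $\alpha$ is normal, its $\mathbb{F}_q$-Order is exactly $x^m - 1$. From the definition of $\mathbb{F}_q$-Order, the order of $\beta = f \circ \alpha$ is the least-degree monic $h \in \mathbb{F}_q[x]$ with $h \circ \beta = (h\cdot f) \circ \alpha = 0$, which happens iff $(x^m-1) \mid h\cdot f$. The minimal such $h$ is
\[
f_1(x) \;=\; \frac{x^m-1}{\gcd(x^m-1,\,f)}.
\]
Setting $f_2(x) = \gcd(x^m-1, f)$, I have a factorization $x^m-1 = f_1 f_2$ with $\operatorname{Ord}(\beta) = f_1$, which is exactly the hypothesis needed to invoke Theorem~\ref{Sumaddi} with $\alpha$ there playing the role of $\beta$ here.

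Next I would apply Theorem~\ref{Sumaddi} directly: for any $g \mid x^m-1$,
\[
\sum_{g} \chi(\beta) \;=\; \mu(d')\,\frac{\phi(g)}{\phi(d')}, \qquad d'(x) = \frac{g(x)}{\gcd(g(x),\,f_2(x))}.
\]
It then remains to check that $d' = d$, i.e.\ that $\gcd(g, f_2) = \gcd(g, f)$. This follows immediately from $g \mid x^m-1$: writing $f_2 = \gcd(x^m-1,f)$, one has
\[
\gcd(g,f_2) \;=\; \gcd\bigl(g,\gcd(x^m-1,f)\bigr) \;=\; \gcd\bigl(\gcd(g,x^m-1),\,f\bigr) \;=\; \gcd(g,f),
\]
so $d' = g/\gcd(g,f) = d$ and the formula in the corollary drops out.

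The only step that requires genuine care is the identification $\operatorname{Ord}(f\circ\alpha) = (x^m-1)/\gcd(x^m-1,f)$; everything else is bookkeeping with gcds. I expect this step to be straightforward once one invokes the $q$-associate dictionary: under the correspondence $l(x) \leftrightarrow L(x)$, composition of linearized polynomials corresponds to multiplication of conventional associates, so $h\circ(f\circ\alpha) = (hf)\circ\alpha$, and the normality of $\alpha$ translates the condition $(hf)\circ\alpha = 0$ into the divisibility $(x^m-1)\mid hf$, giving the claimed minimal $h$.
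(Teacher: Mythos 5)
Your proposal is correct and follows exactly the route the paper intends: the corollary is stated as an immediate consequence of Theorem~\ref{Sumaddi}, obtained by noting that $\operatorname{Ord}(f\circ\alpha)=\frac{x^m-1}{\gcd(x^m-1,f)}$ when $\alpha$ is normal and applying the theorem with $f_2=\gcd(x^m-1,f)$. Your additional check that $\gcd(g,f_2)=\gcd(g,f)$ because $g\mid x^m-1$ is the right bookkeeping step and makes the deduction complete.
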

    \begin{rem}
        It is well-known that for any positive integer $n$, sum of primitive $n$-th roots of unity is given by $\mu(n)$ i.e. $\displaystyle\sum_n \psi_n(\alpha) = \mu\left( n \right) $ where $\alpha$ is a primitive element.  From the above result, we get that for any polynomial $g\vert x^m-1$ over  $\mathbb{F}_q[x],~  \displaystyle \sum_g \chi(\alpha)=\mu(g)$, where $\alpha$ is a normal element.
    \end{rem}

    \section{Role of Sum of Additive Characters in Building Characteristic Function for $k$-normal elements}
    In this section, we establish some basic results on summations involving the Euler's totient function $\phi$ applied to polynomials, which will be utilized in the characteristic function for $k$-normal elements.

    \begin{lem}\label{Lemmasumphi}
         If $u$ is an irreducible polynomial over $\mathbb{F}_q[x]$ of degree $n$, then $\displaystyle \sum_{i=0}^l\phi(u^i) = \frac{\phi(u^{l+1})}{\phi(u)}$ for any positive integer $l$.
     \end{lem}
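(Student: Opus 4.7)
The plan is to evaluate both sides of the claimed identity directly using the closed-form expression for Euler's totient on prime powers. Since $u$ is irreducible of degree $n$, we have $|u|=q^n$ and $\phi(u^i)=|u|^i-|u|^{i-1}=q^{ni}-q^{n(i-1)}$ for every $i\geq 1$, while $\phi(u^0)=\phi(1)=1$. No structural machinery is needed beyond this formula.

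First I would handle the left-hand side by separating out the $i=0$ term and telescoping:
\[
\sum_{i=0}^{l}\phi(u^i)=1+\sum_{i=1}^{l}\bigl(q^{ni}-q^{n(i-1)}\bigr)=1+(q^{nl}-1)=q^{nl}.
\]
Next I would compute the right-hand side as a ratio and cancel the common factor:
\[
\frac{\phi(u^{l+1})}{\phi(u)}=\frac{q^{n(l+1)}-q^{nl}}{q^{n}-1}=\frac{q^{nl}(q^{n}-1)}{q^{n}-1}=q^{nl}.
\]
The two quantities agree, which completes the proof.

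If a more conceptual argument is preferred, I would instead invoke the polynomial analogue of the classical identity $\sum_{d\mid n}\phi(d)=n$, namely $\sum_{g\mid f}\phi(g)=q^{\deg f}$, applied to $f=u^{l+1}$ whose only divisors are $1,u,\ldots,u^{l+1}$; this gives $\sum_{i=0}^{l+1}\phi(u^i)=q^{n(l+1)}$, and subtracting off $\phi(u^{l+1})=q^{n(l+1)}-q^{nl}$ yields the same value $q^{nl}$ on the left. Either way, there is no genuine obstacle: the identity is essentially a telescoping (equivalently, a geometric-series) computation, and the only point requiring care is keeping the $i=0$ term separate from the closed form for $\phi(u^i)$ with $i\geq 1$.
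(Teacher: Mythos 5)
Your proof is correct and follows essentially the same route as the paper: telescoping $\sum_{i=0}^{l}\phi(u^i)$ to $q^{nl}$ using $\phi(u^i)=q^{ni}-q^{n(i-1)}$, and matching this with $\phi(u^{l+1})/\phi(u)=q^{nl}$. The alternative argument via $\sum_{g\mid f}\phi(g)=q^{\deg f}$ is a nice extra, but the core computation coincides with the paper's.
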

     \begin{proof}
         \begin{align*}
             \displaystyle \sum_{i=0}^l\phi(u^i) &= 1+\phi(u)+\phi(u^2)+\dots +\phi(u^{l})\\
             &=1+(q^n-1)+(q^{2n}-q^n)+\dots+(q^{ln}-q^{(l-1)n}) = q^{ln} = \frac{q^{ln}(q^n-1)}{q^n-1} \\
             &= \frac{q^{n(l+1)}-q^{ln}}{q^n-1} = \frac{\phi(u^{l+1})}{\phi(u)}.
         \end{align*}
     \end{proof}

    \begin{lem}\label{Lemmaphi(hg)}
        Let $f(x) \in \mathbb{F}_q[x]$ be a divisor of $x^m-1$. For any square-free divisor $h(x)$ of $f, \displaystyle \sum_{\substack{g(x) \vert \frac{x^m-1}{f}\\ gcd(h,\frac{x^m-1}{fg}) =1} } \phi(hg) = q^{deg(\frac{x^m-1}{f})}\phi(h)$.
    \end{lem}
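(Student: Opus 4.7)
The plan is to reduce the displayed sum to a standard divisor sum by identifying exactly which divisors $g$ of $N:=(x^m-1)/f$ satisfy the coprimality condition, and then to exploit multiplicativity of the polynomial Euler function $\phi$ together with the identity $\sum_{d\mid a}\phi(d)=|a|=q^{\deg a}$.

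First I would classify the irreducible factors of $h$ according to whether they divide $N$. For an irreducible $p\mid h$ with $v_p(N)=0$, the condition $p\nmid N/g$ holds automatically, whereas for $p\mid h$ with $v_p(N)=a\geq 1$, combining $p\nmid N/g$ with $g\mid N$ forces $v_p(g)=a$ exactly. Setting
\[
M\;=\prod_{\substack{p\mid h\\ p\mid N}} p^{\,v_p(N)},
\]
the condition $\gcd(h,N/g)=1$ becomes equivalent to the factorisation $g=Mg'$ with $g'$ ranging over all divisors of $N/M$. This reparametrises the summation domain as the full divisor set of $N/M$, removing the awkward coprimality constraint.

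Next, write $h=h_1h_2$, where $h_1$ is the product of those irreducible factors of $h$ that divide $N$ and $h_2$ the product of the others; both are square-free. Every divisor $g'$ of $N/M$ is coprime to each irreducible factor of $hM$, so $\phi(hMg')=\phi(hM)\phi(g')$. Using $\phi(p^{a+1})=|p|^{a}\phi(p)$ on each factor of $h_1$ and the coprimality of $h_1$ with $h_2$, a short calculation gives
\[
\phi(hM)=\phi(h_2)\prod_{p\mid h_1}|p|^{\,v_p(N)}\phi(p)=|M|\,\phi(h).
\]

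Finally the sum collapses, via the polynomial identity $\sum_{d\mid a}\phi(d)=q^{\deg a}$, to
\[
\sum_{\substack{g\mid N\\ \gcd(h,N/g)=1}}\phi(hg)=\phi(hM)\sum_{g'\mid N/M}\phi(g')=|M|\phi(h)\cdot q^{\deg(N/M)}=q^{\deg N}\phi(h),
\]
which is the desired identity since $\deg N=\deg\!\bigl((x^m-1)/f\bigr)$. The main obstacle is really only the first step: once one sees that admissibility of $g$ forces it to absorb the full $N$-part of every irreducible factor of $h$ that meets $N$, the remaining computation is routine bookkeeping with the polynomial $\phi$.
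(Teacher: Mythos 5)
Your proof is correct and takes essentially the same route as the paper's: both observe that the condition $\gcd(h,(x^m-1)/(fg))=1$ forces $g$ to absorb the full $(x^m-1)/f$-part of every irreducible factor of $h$, and then finish by multiplicativity of $\phi$ together with the totient divisor-sum identity. Your substitution $g=Mg'$ and the global identity $\sum_{d\mid a}\phi(d)=q^{\deg a}$ merely package more cleanly what the paper does prime-by-prime (splitting the indices according to whether $p_i\mid h$ and using $\sum_{i=0}^{l}\phi(p^{i})=q^{l\deg p}$).
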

    \begin{proof}
        The factorization of $x^m - 1$ over $\mathbb{F}_q$ can be expressed as a product of the powers of distinct irreducible polynomials, where each irreducible factor appears with the same exponent. Following the approach in~\cite{alizadeh2017some}, we write
        $x^m - 1 = \displaystyle \prod_{i=1}^r p_i^a(x),$
        where each $p_i(x)$ is an irreducible polynomial over $\mathbb{F}_q$, and $a$ is a positive integer determined by the multiplicity structure of the factorization.  
        Let $f = \displaystyle \prod_{i=1}^rp_i^{b_i}(x) \text{ with  } 0\leq b_i\leq a, h = \prod_{i=1}^rp_i^{c_i}(x) \text{ with  } c_i = 0 \text{ or } 1 \text{ and } g \text{ is of the form } g=\prod_{i=1}^rp_i^{d_i}(x) \text{ with  } 0 \leq d_i \leq a-b_i $ and degree of $p_i$ be $m_i$.\\
         Since gcd$(h,\dfrac{x^m-1}{fg}) = 1$, if $c_i=1$ then $d_i=a-b_i$. Let $S = \{i: c_i = 1\}, T = \{j: c_j = 0\}$. \\
         Consider, $\phi(hg) = \displaystyle\prod_{i=1}^r \phi(p_i^{c_i+d_i}) = \prod_{i \in S} \phi(p_i^{1+a-b_i}) \prod_{j \in T} \phi(p_j^{d_j})$, then
         $$\displaystyle \sum_{\substack{g \vert \frac{x^m-1}{f}\\ 
             gcd(h,\frac{x^m-1}{fg}) =1} } \phi(hg) =  \prod_{i \in S} \phi(p_i^{1+a-b_i}) \sum_{\substack{j\in T \\ d_j = 0}}^{a-b_j}\prod_j \phi(p_j^{d_j})$$
             By interchanging summation and product and simplifying $\displaystyle \sum_{d_j=0}^{a-b_j}\phi(p_j^{d_j}) $ we get,
             $$\displaystyle \sum_{\substack{g \vert \frac{x^m-1}{f}\\ 
             gcd(h,\frac{x^m-1}{fg}) =1} } \phi(hg) = \prod_{i \in S} \phi(p_i^{1+a-b_i}) \prod_{j \in T}q^{m_j(a-b_j)} = \prod_{i \in S} q^{m_i(a-b_i)}(q^{m_i}-1) \prod_{j \in T}q^{m_j(a-b_j)}  $$
             Further by using the formula for Euler-phi function of a polynomial we get,
             $$\displaystyle \sum_{\substack{g \vert \frac{x^m-1}{f}\\ 
             gcd(h,\frac{x^m-1}{fg}) =1} } \phi(hg) = \prod_{j=1}^r q^{m_j(a-b_j)}\prod_{i \in S} (q^{m_i}-1) = q^{deg(\frac{x^m-1}{f})}\phi(h).$$
    \end{proof}
    \begin{rem}
        Note that Lemmas 2.2 and 2.3 from \cite{OrderRelated}, originally stated for positive integers, can be extended to the polynomial setting by using Lemma~\ref{Lemmaphi(hg)}.
    \end{rem}
    
     S. D. Cohen \cite{OrderRelated}  provided a characteristic function for elements with a specific multiplicative order. Motivated by this, we derive a characteristic function for elements with a specific \(\mathbb{F}_q\)-Order.
     
    Let $\eta_f$ be the characteristic function for elements with $\mathbb{F}_q$-Order $f$. For any $f \in \mathbb{F}_q[x]$ dividing $x^m-1$ and $\alpha \in \mathbb{F}_{q^m}$, 
     $$\eta_f(\alpha) = \begin{cases}
         1 & \text{ if } \mathbb{F}_q\text{-Order of } \alpha \text{ is } f\\
             0 & \text{ otherwise.}
     \end{cases}$$
     \begin{thm}\label{MainThm}
         Let $f(x)$ be a divisor of $x^m-1$ of degree $n$ over $\mathbb{F}_q[x]$. Then, for any $\alpha \in \mathbb{F}_{q^m}$,
         $$\eta_f(\alpha) = \displaystyle \frac{\phi(f)}{q^m}\sum_{h\vert f} \frac{\mu(h)}{\phi(h)} \sum_{\substack{g \vert \frac{x^m-1}{f}\\ gcd(h,\frac{x^m-1}{fg}) =1} } \sum_{hg} \chi(\alpha) $$  where the inner sum is running over all the additive characters of $\mathbb{F}_q$-Order $hg$.
     \end{thm}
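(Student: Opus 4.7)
My plan is to apply Theorem~\ref{Sumaddi} to the innermost character sum and then exploit the multiplicativity of $\mu$, $\phi$, and $\gcd$ across the irreducible factors of $x^m - 1$.

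Let $f_1$ denote the $\mathbb{F}_q$-Order of $\alpha$ and write $x^m - 1 = f_1 f_2$. By Theorem~\ref{Sumaddi}, the innermost sum evaluates as
$$\sum_{hg} \chi(\alpha) = \mu(d)\,\frac{\phi(hg)}{\phi(d)}, \qquad d = \frac{hg}{\gcd(hg,f_2)}.$$
Substituting this in, the asserted formula for $\eta_f(\alpha)$ reduces to showing that $\frac{\phi(f)}{q^m}\sum_{h\mid f}\frac{\mu(h)}{\phi(h)} T(h)$ equals $1$ when $f_1 = f$ and $0$ otherwise, where $T(h)$ denotes the corresponding sum over the admissible $g$.

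Write $x^m-1 = \prod p_i^{a_i}$, $f = \prod p_i^{b_i}$, and $f_1 = \prod p_i^{t_i}$. Since $\mu(h)\neq 0$ forces $h = \prod_{i\in S}p_i$ squarefree with $S\subseteq\{i:b_i\geq 1\}$, and since the conditions on $g = \prod p_i^{d_i}$ (namely $0\leq d_i \leq a_i - b_i$, with $d_i = a_i - b_i$ whenever $i\in S$) decouple across distinct $p_i$, the sum $T(h)$ factors as a product $\prod_i T_i(h)$. A short case analysis on $(b_i, t_i)$ and on whether $i\in S$, using Lemma~\ref{Lemmasumphi} to evaluate $\sum_{d_i=0}^{a_i-b_i}\phi(p_i^{d_i}) = |p_i|^{a_i-b_i}$, shows that $T_i(h) = 0$ whenever $b_i < t_i$; otherwise $T_i(h) = |p_i|^{a_i-b_i}$, multiplied by $-1$ if $i\in S$ and $b_i = t_i$, or by $\phi(p_i)$ if $i\in S$ and $b_i > t_i$. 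In particular, $T(h) = 0$ for every $h$ unless $f_1 \mid f$.

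Assuming $f_1 \mid f$ and collecting signs while summing over $h$ (equivalently, over $S$), the expression collapses to
$$\sum_{h\mid f}\frac{\mu(h)}{\phi(h)} T(h) = q^{m-\deg f}\Bigl(\prod_{b_i = t_i \geq 1}\bigl(1 + \tfrac{1}{\phi(p_i)}\bigr)\Bigr)\Bigl(\sum_{B\subseteq\{i:b_i>t_i\}}(-1)^{|B|}\Bigr).$$
The last bracket vanishes whenever $\{i : b_i > t_i\}$ is nonempty, and together with $f_1 \mid f$ this forces $f_1 = f$ for the total to be nonzero. When $f_1 = f$, the identity $\prod_{b_i\geq 1}\frac{|p_i|}{|p_i|-1} = \frac{q^{\deg f}}{\phi(f)}$ makes the surviving product evaluate to $1$, completing the proof. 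The main obstacle is the subcase $i \notin S$ with $b_i < t_i$: there the partial sum $\sum_{d_i = 0}^{a_i - t_i}\phi(p_i^{d_i})$ is cancelled exactly by the Möbius contribution at $d_i = a_i - t_i + 1$ (forced by Lemma~\ref{CarlitzLemmairpol}), and this delicate cancellation is what makes $T(h)$ detect the divisibility $f_1 \mid f$.
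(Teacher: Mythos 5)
Your proposal is correct, and the local values you assert all check out: writing $x^m-1=\prod_i p_i^{a_i}$, $f=\prod_i p_i^{b_i}$, $\mathrm{Ord}(\alpha)=\prod_i p_i^{t_i}$ and $h=\prod_{i\in S}p_i$, the factor at $i\in S$ is $\phi(p_i^{a_i-b_i+1})=|p_i|^{a_i-b_i}\phi(p_i)$ if $b_i>t_i$, equals $-|p_i|^{a_i-b_i}$ if $b_i=t_i$, and vanishes if $b_i<t_i$, while at $i\notin S$ the truncated sum of $\phi(p_i^{d_i})$ gives $|p_i|^{a_i-b_i}$ when $t_i\le b_i$ and is exactly cancelled by the M\"obius term at $d_i=a_i-t_i+1$ when $t_i>b_i$; the concluding inclusion--exclusion over $S$ is also right. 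Your route is, however, organized genuinely differently from the paper's. After the common first step (Theorem~\ref{Sumaddi}), the paper splits into three cases according to whether $\mathrm{Ord}(\alpha)$ equals $f$, properly divides $f$, or does not divide $f$: Case~1 is evaluated via Lemma~\ref{Lemmaphi(hg)}, Case~2 by pairing terms $S_h$ and $S_{uh}$ that cancel, and Case~3 by grouping $g$ along powers of a critical irreducible $u$ and invoking Lemma~\ref{Lemmasumphi}. You instead factor the entire double sum into local Euler factors over the irreducible divisors of $x^m-1$, so all three outcomes fall out of one formula: the paper's Case~3 mechanism becomes your local vanishing $T_i(h)=0$ for $b_i<t_i$, and its Case~2 pairing becomes the vanishing alternating sum over $\{i:b_i>t_i\}$. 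Your version buys uniformity---no sub-case bookkeeping, no need for Lemma~\ref{Lemmaphi(hg)}, and the identity $\sum_{h\mid f}\mu(h)^2/\phi(h)=q^{\deg f}/\phi(f)$, used implicitly in the paper's Case~1, appears explicitly as your product over $\{i:b_i=t_i\ge 1\}$---while the paper's version stays closer to Cohen's integer-order template and reuses Lemma~\ref{Lemmaphi(hg)}, which it needs anyway. If you write your argument up in full, the only point to spell out is the routine justification that the constraints on $g$ and the quantities $\mu$, $\phi$ and $\gcd(\cdot\,,f_2)$ decompose prime-by-prime, so that $T(h)=\prod_i T_i(h)$.
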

     \begin{proof}
        Let $k(x)$ be the $\mathbb{F}_q$-Order of $\alpha$ and $x^m-1=(p_1(x)p_2(x)\dots p_r(x))^a, f=p_1^{b_1} p_2^{b_2}\dots p_r^{b_r}, k=p_1^{c_1} p_2^{c_2}\dots p_r^{c_r}$. 

        By using theorem \ref{Sumaddi}, 
        \begin{equation}\label{EQThm}
            \eta_f(\alpha) = \displaystyle \frac{\phi(f)}{q^m}\sum_{h\vert f} \frac{\mu(h)}{\phi(h)} \sum_{\substack{g \vert \frac{x^m-1}{f}\\ gcd(h,\frac{x^m-1}{fg}) =1} } \mu \left( \frac{gh}{\text{gcd}(gh,\frac{x^m-1}{k})}\right)\frac{\phi(gh)}{\phi\left( \frac{gh}{\text{gcd}(gh,\frac{x^m-1}{k})}\right)}
        \end{equation}
         
        The following three mutually exclusive cases cover all possibilities:\\
         \textbf{Case 1:} $k(x)=f(x)$\\
         
         Since gcd$(h,\frac{x^m-1}{fg}) = 1$, gcd$(hg,\frac{x^m-1}{f})=g$. The equation \ref{EQThm} reduces to,
         $$\eta_f(\alpha)= \frac{\phi(f)}{q^m}\sum_{h\vert f} \frac{\mu(h)}{\phi(h)} \sum_{\substack{g \vert \frac{x^m-1}{f}\\ gcd(h,\frac{x^m-1}{fg}) =1} } \mu(h) \frac{\phi(gh)}{\phi(h)} $$
         By using lemma \ref{Lemmaphi(hg)}, the above expression becomes
         \begin{align*}
             \eta_f(\alpha) &=\frac{\phi(f)}{q^m}\sum_{h\vert f} \frac{(\mu(h))^2}{(\phi(h))^2}q^{m-n}\phi(h)=\frac{\phi(f)}{q^m}q^{m-n}\sum_{h\vert f} \frac{(\mu(h))^2}{\phi(h)} \\
             & =\frac{\phi(f)}{q^m}q^{m-n}\frac{q^n}{\phi(f)} = 1.
         \end{align*}
         \textbf{Case 2:} $k \mid f$ and $k \neq f$\\
         Without loss of generality we assume that $f \neq 1$, we take $b_r \neq 0$.\\
        Let $S_h = \displaystyle\frac{\mu(h)}{\phi(h)}\sum_{\substack{g \vert \frac{x^m-1}{f} \\ \text{gcd}(h,\frac{x^m-1}{fg})=1}} \mu\left( \frac{gh}{\text{gcd}(gh,\frac{x^m-1}{k})}\right)\frac{\phi(gh)}{\phi\left( \frac{gh}{\text{gcd}(gh,\frac{x^m-1}{k}}\right)}$.\\
        Suppose, $c_i<b_i ~ \forall i, 0 \leq i \leq r.$
        Since $g \vert \frac{x^m-1}{f}, c_i<b_i$ and $h$ is a square-free divisor of $f$, it follows that $gh \vert \frac{x^m-1}{k}$. Thus, gcd$(gh,\frac{x^m-1}{k})= gh.$\\
        Then $S_h = \dfrac{\mu(h)}{\phi(h)} \displaystyle \sum_{\substack{g \vert \frac{x^m-1}{f} \\ \text{gcd}(h,\frac{x^m-1}{fg})=1}} \phi(gh) = \mu(h) q^{m-n}$.\\
        Now for any divisor $h$ of $f$, let us define $h^c = \begin{cases}
            \frac{h}{p_r} \text{ if } p_r \vert h \\
            hp_r \text{ otherwise}.
        \end{cases}$\\
        Note that both $h$ and $h^c$ are square-free divisors of $f$, and they are distinct. Moreover, we have $\mu(h^c) = -\mu(h),$
        thus $S_h + S_{h^c} = q^{m-n}(mu(h) + mu(h^c)) = 0$.\\
        Thus, all such terms cancel in pairs, and the total contribution to $\eta_f(\alpha)$ is zero.\\
        Suppose, $c_1=b_1, c_2 = b_2, \dots, c_j=b_j$ for some $j,~1\leq j \leq r$. Note that at least one $c_j \neq b_j$. Let us take $c_r \neq b_r$. \\
        Now, gcd$\left( gh , \dfrac{x^m-1}{k}\right) = \dfrac{gh}{\text{gcd}(h,p_1p_2\dots p_j)}$. Then,
        \begin{align*}
            S_h &= \frac{\mu(h)}{\phi(h)} \sum_{\substack{g\vert \frac{x^m-1}{f}\\\text{gcd}(h,\frac{x^m-1}{fg})=1}} \frac{\mu(\text{gcd}(h,p_1p_2\dots p_j))}{\phi(\text{gcd}(h,p_1p_2\dots p_j))} \phi(hg) \\
            &= \frac{\mu(h)\mu(\text{gcd}(h,p_1p_2\dots p_j))}{\phi(\text{gcd}(h,p_1p_2\dots p_j))}q^{m-n}.
        \end{align*}
        Again, we pair each $S_h$ with a unique $S_{h^c}$, using the same rule for $h^c$ as above (adding/removing the fixed prime $p_r$). This time, note that
        \[
        \gcd(h^c, p_1p_2\dots p_k) = \gcd(h, p_1p_2\dots p_k),
        \]
        so the pre-factor is the same in both $S_h$ and $S_{h^c}$, and again\\
        $S_h + S_{h^c} = q^{m-n} \dfrac{\mu(\text{gcd}(h,p_1p_2\dots p_k))}{\phi(\text{gcd}(h,p_1p_2\dots p_k))} (\mu(h)+\mu(h^c)) = 0$.\\
        Thus, $\eta_f(\alpha) = 0$.\\
        \textbf{Case 3:} $k \nmid f$\\
        Then there exists an irreducible polynomial $u(x)$ such that $l$ is the highest power of $u$ dividing $k$ and $u^l$ does not divide $f$.\\
        Let $a$ be the highest power of $u$ dividing $x^m-1$, $r$ be the highest power of $u$ dividing $f$ and $h, g$ be fixed with $u \nmid g$, $h$  square-free. Let,
        $$S_g = \mu\left(\dfrac{gh}{\text{gcd}(gh,\frac{x^m-1}{k})}\right)\dfrac{\phi(gh)}{\phi\left(\dfrac{gh}{\text{gcd}(gh,\frac{x^m-1}{k})}\right)}.$$
        Consider the sum, $S = \displaystyle \sum_{i=0}^{a-r}S_{u^ig} = \sum_{i=0}^{a-r} \mu\left(\frac{u^igh}{\text{gcd}(u^igh,\frac{x^m-1}{k})}\right)\frac{\phi(u^igh)}{\phi\left(\frac{u^igh}{\text{gcd}(u^igh,\frac{x^m-1}{k})}\right)}$. \\
        Suppose $u\nmid h$ then,  $\text{gcd}(u^igh,\frac{x^m-1}{k}) = u^i \text{gcd}(gh,\frac{x^m-1}{k})$ for $0 \leq i \leq a-l$ and $\text{gcd}(u^igh,\frac{x^m-1}{k}) = u^{a-l} \text{gcd}(gh,\frac{x^m-1}{k})$ for $a-l < i \leq a-r$. Thus,
        
        \begin{align*}
            S =& \displaystyle \sum_{i=0}^{a-l} \mu \left( \frac{gh}{\text{gcd}(gh,\frac{x^m-1}{k})}\right) \frac{\phi(u^i)\phi(gh)}{\phi\left( \frac{gh}{\text{gcd}(gh,\frac{x^m-1}{k})}\right)} \\&+ \sum_{j=a-l+1}^{l} \mu(u^{j-a+l})  \mu \left( \frac{gh}{\text{gcd}(gh,\frac{x^m-1}{k})}\right) \frac{\phi(u^j)\phi(gh)}{\phi(u^{j-a+l})\phi\left( \frac{gh}{\text{gcd}(gh,\frac{x^m-1}{k})}\right)}   \\
            &= \mu \left( \frac{gh}{\text{gcd}(gh,\frac{x^m-1}{k})}\right) \frac{\phi(gh)}{\phi\left( \frac{gh}{\text{gcd}(gh,\frac{x^m-1}{k})}\right)}\left[1+\phi(u)+\phi(u^2)+\dots+ \phi(u^{a-l})-\frac{\phi(u^{a-l+1})}{\phi(u)}\right]
        \end{align*}
        By lemma \ref{Lemmasumphi}, we get $S=0$.\\
        
        Suppose $u \mid h$, as gcd$(h,\frac{x^m-1}{fg}) = 1, u^{a-r}$ must divide $g$, a contradiction to the choice of $g$ such that $u \nmid g$.\\           
        Thus, $\eta_f = 0$ .\\
        Similarly, if $u \mid g$, write $g = u^j t$ with $\gcd(t,u)=1$.  Replacing $g$ by $t$ in the above argument shows again that all powers of $u$ would have to divide $t$, contradicting $u\nmid t$.  Therefore this case also yields $\eta_f = 0$.

     \end{proof}
     \begin{thm}
         Consider the field $\mathbb{F}_{q^m}$, for any $\alpha \in \mathbb{F}_{q^m}$, and non-negative integer $k\leq m$,
         $$\zeta_k(\alpha) := \displaystyle    \sum_{\substack{f\vert x^m-1 \\ deg(f)=m-k}}\frac{\phi(f)}{q^m}\
         \sum_{h\vert f} \frac{\mu(h)}{\phi(h)} \sum_{\substack{g \vert \frac{x^m-1}{f}\\ gcd(h,\frac{x^m-1}{fg}) =1} } \sum_{hg} \chi(\alpha) =
         \begin{cases}
             1 & \text{ if } \alpha \text{ is } k- \text{normal } \\
             0 & \text{ otherwise}
        \end{cases}$$ is the characteristic function for $k$-normal elements.
    \end{thm}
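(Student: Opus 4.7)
The plan is to reduce the statement directly to Theorem~\ref{MainThm}. The triple inner sum
\[
\frac{\phi(f)}{q^m}\sum_{h\mid f} \frac{\mu(h)}{\phi(h)} \sum_{\substack{g \mid \frac{x^m-1}{f}\\ \gcd(h,\frac{x^m-1}{fg}) =1} } \sum_{hg} \chi(\alpha)
\]
appearing inside the definition of $\zeta_k(\alpha)$ is exactly $\eta_f(\alpha)$, the characteristic function for elements of $\mathbb{F}_q$-Order $f$. Therefore I would first rewrite
\[
\zeta_k(\alpha) \;=\; \sum_{\substack{f \mid x^m-1 \\ \deg(f) = m-k}} \eta_f(\alpha).
\]

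The next step is to exploit uniqueness of the $\mathbb{F}_q$-Order: every $\alpha \in \mathbb{F}_{q^m}$ has a single $\mathrm{Ord}(\alpha)$ dividing $x^m-1$, and $\eta_f(\alpha)=1$ iff $f=\mathrm{Ord}(\alpha)$ (zero otherwise). Hence at most one term in the sum contributes, and
\[
\zeta_k(\alpha) \;=\; \begin{cases} 1 & \text{if } \deg(\mathrm{Ord}(\alpha)) = m-k,\\ 0 & \text{otherwise.}\end{cases}
\]

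To finish I would invoke the standard equivalence (see \cite{Mullen2013Existance}): $\alpha \in \mathbb{F}_{q^m}$ is $k$-normal over $\mathbb{F}_q$ if and only if $\deg(\mathrm{Ord}(\alpha)) = m-k$. The underlying reason is that, under the correspondence between linearized polynomials and their conventional $q$-associates, $\mathrm{Ord}(\alpha)$ is the (conventional associate of) $(x^m-1)/\gcd(x^m-1, g_\alpha(x))$, so $\deg(\gcd)=k$ translates exactly to $\deg(\mathrm{Ord}(\alpha))=m-k$. Substituting this into the formula above identifies $\zeta_k$ with the characteristic function of $k$-normal elements.

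The main obstacle is precisely this final equivalence between $k$-normality and the degree of the $\mathbb{F}_q$-Order. Although it is well known, a self-contained proof can be assembled by viewing $\mathbb{F}_{q^m}$ as a cyclic $\mathbb{F}_q[x]/(x^m-1)$-module under the Frobenius action $x\cdot \beta=\beta^q$, and comparing the annihilator of $\alpha$ in this module with the gcd appearing in the definition of $k$-normal. Once that correspondence is secured, everything else is a direct bookkeeping of the definition of $\zeta_k$ together with Theorem~\ref{MainThm}.
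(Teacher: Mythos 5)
Your argument is exactly the paper's: rewrite the inner triple sum as $\eta_f(\alpha)$ from Theorem~\ref{MainThm}, use uniqueness of the $\mathbb{F}_q$-Order so at most one $f$ of degree $m-k$ contributes, and invoke the equivalence that $\alpha$ is $k$-normal if and only if $\deg(\mathrm{Ord}(\alpha))=m-k$. The paper states this in a single sentence after the theorem; your write-up is a correct, slightly more detailed version of the same reasoning.
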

    We use the fact that an element $\alpha \in \mathbb{F}_{q^m}$ is $k$-normal if and only if degree of $\mathbb{F}_q$-Order of $\alpha$ is of degree $m-k$, and the theorem \ref{MainThm} for proving the above result.
      
    We now present a result that relate the \(\mathbb{F}_q\)-Order, additive characters, and the Euler phi function, in analogy with the integer setting discussed in \cite{Laishram_AP}.
    \begin{prop}
        Consider the field $\mathbb{F}_{q^m}$ over $\mathbb{F}_q$, for any divisor $g$ of $x^m-1$, $$\displaystyle \sum_{\substack{f \in \mathbb{F}_q[x] \\ deg(f) < m}} \mid \sum_g \chi(f \circ \alpha) \mid = q^{m-deg(g)}\phi(g) W(g),$$
        where $\alpha$ is normal element over the field and inner sum is running over all the additive characters of $\mathbb{F}_q$-Order $g$.
    \end{prop}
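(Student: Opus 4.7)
The plan is to reduce the proposition to a counting problem by first evaluating the inner sum via the corollary to Theorem~\ref{Sumaddi}. Since $\alpha$ is a normal element of $\mathbb{F}_{q^m}$, that corollary gives
\[
\sum_g \chi(f \circ \alpha) \;=\; \mu(d)\,\frac{\phi(g)}{\phi(d)}, \qquad d \;=\; \frac{g}{\gcd(g,f)}.
\]
Taking absolute values, only those $f$ for which $d$ is square-free survive, and each surviving term equals $\phi(g)/\phi(d)$. So I would re-organize the outer sum over $f$ by grouping according to the value of $d$, rewriting the left-hand side as
\[
\sum_{\substack{f \in \mathbb{F}_q[x] \\ \deg f < m}} \Bigl| \sum_g \chi(f \circ \alpha) \Bigr| \;=\; \sum_{\substack{d \mid g \\ d \text{ square-free}}} \frac{\phi(g)}{\phi(d)}\, N(d),
\]
where $N(d)$ is the number of $f \in \mathbb{F}_q[x]$ with $\deg f < m$ and $\gcd(g, f) = g/d$.

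Next I would compute $N(d)$. Setting $h = g/d$ and parametrizing $f = h f'$ with $\gcd(f', d) = 1$, the constraint $\deg f < m$ becomes $\deg f' < m - \deg g + \deg d$. Applying the division algorithm to write $f' = q\,d + r$ with $\deg r < \deg d$, the coprimality condition $\gcd(f', d) = 1$ depends only on the residue $r$, yielding $\phi(d)$ admissible residues; the quotient $q$ is then free to be any polynomial with $\deg q < m - \deg g$, contributing $q^{m - \deg g}$ further choices (including $q = 0$). This gives $N(d) = \phi(d)\, q^{m - \deg g}$.

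Substituting back, the factor $\phi(d)$ in $N(d)$ exactly cancels the $1/\phi(d)$ inside the sum, so each of the $W(g)$ square-free divisors of $g$ contributes $\phi(g)\, q^{m - \deg g}$, producing
\[
\sum_{\substack{f \\ \deg f < m}} \Bigl| \sum_g \chi(f \circ \alpha) \Bigr| \;=\; q^{m - \deg g}\, \phi(g)\, W(g),
\]
as required.

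The main obstacle is the counting lemma for $N(d)$: verifying that the number of polynomials of bounded degree coprime to $d$ scales as $\phi(d)\, q^{m-\deg g}$ via the division-algorithm argument. Everything else is a routine consequence of the corollary and the definition $W(g) = 2^{w(g)}$. One minor sanity check to include is that the zero polynomial $f = 0$ corresponds to $\gcd(g,0) = g$ and hence $d = 1$, contributing $\phi(g)$ to the sum—matching the direct computation $\sum_g \chi(0) = \phi(g)$—so the convention that $\deg f < m$ includes $f = 0$ is both natural and required for the identity to hold as stated.
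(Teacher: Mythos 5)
Your proposal is correct and follows essentially the same route as the paper: evaluate the inner sum via the corollary to Theorem~\ref{Sumaddi}, group the outer sum over $f$ according to $\gcd(g,f)$ (your $d$ is just the paper's $g/d$ relabelled), and count the admissible $f$ by the division algorithm to get $\phi(d)\,q^{m-\deg g}$ per square-free divisor, yielding $\phi(g)\,q^{m-\deg g}\,W(g)$. Your explicit treatment of the $f=0$ case and of the coprimality condition $\gcd(f',d)=1$ is a welcome tightening of details the paper leaves implicit, but it is not a different argument.
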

    \begin{proof}
        \begin{align*}
             \displaystyle \sum_{\substack{f \in \mathbb{F}_q[x] \\ deg(f) < m}} \mid \sum_g \chi(f \circ \alpha) \mid 
             &= \sum_{\substack{f \in \mathbb{F}_q[x] \\ deg(f) < m}} \mid \mu(\frac{g}{\text{gcd}(g,f)})\mid \frac{\phi(g)}{\phi(\frac{g}{\text{gcd}(g,f)})}\\
             &= \phi(g) \sum_{\substack{f \in \mathbb{F}_q[x] \\ deg(f) < m}} \frac{\left (\mu(\frac{g}{\text{gcd}(g,f)})\right)^2}{\phi(\frac{g}{\text{gcd}(g,f)})}\\
             &= \phi(g)\sum_{d \mid g} \frac{\left( \mu(\frac{g}{d}) \right)^2}{\phi(\frac{g}{d})} \sum_{\substack{h \in \mathbb{F}_q[x]\\ deg(h)<m-deg(d) \\ \text{gcd}(h,\frac{g}{d})=1 }} 1 \\
         \end{align*}
         By applying division algorithm for $h$ and $\frac{g}{d}$, $h$ is of the form $\frac{g}{d}s + t$ with $\text{gcd}(k,t) = 1$ and $deg(s) < m-deg(g)$, then \\
         $\displaystyle \sum_{\substack{h \in \mathbb{F}_q[x]\\ deg(h)<m-deg(d) \\ \text{gcd}(h,\frac{g}{d})=1 }} 1 = \sum_{\substack{\frac{g}{d}s+t \\ (\frac{g}{d},t) = 1 \\ deg(d) < m- deg(g)}} 1 = \phi\left(\frac{g}{d}\right)q^{m-deg(g)}$\\
         Thus, $\displaystyle \sum_{\substack{f \in \mathbb{F}_q[x] \\ deg(f) < m}} \mid \sum_g \chi(f \circ \alpha) \mid =  \phi(g) \sum_{d \mid g} \frac{\left( \mu(\frac{g}{d}) \right)^2}{\phi(\frac{g}{d})} \phi\left(\frac{g}{d}\right)q^{m-deg(g)}\\ = \phi(g)q^{m-deg(g)} \sum_{d \mid g} \left( \mu(\frac{g}{d}) \right)^2 = \phi(g) q^{m-deg(g)} W(g)$.
    \end{proof}
    \section{Conclusion}
    In this work, we explored the sum of additive characters over finite fields by organizing them according to their \(\mathbb{F}_q\)-Order. We derived a general formula for such sums, which serves as an additive analogue of known results involving multiplicative characters. As direct applications, we obtained a formula for the Möbius function of polynomials and constructed a characteristic function for \(k\)-normal elements. Additionally, several identities traditionally formulated for integers were successfully extended to the polynomial case, demonstrating the depth of analogy between number-theoretic and polynomial-theoretic structures.


\begin{thebibliography}{10}
    	
    	\bibitem{Carlitz1952Prim}
    	L.~Carlitz, ``Primitive roots in a finite field,'' {\em Transactions of the
    		American Mathematical Society}, vol.~73, no.~3, pp.~373--382, 1952.
    	
    	\bibitem{CarlitzPrimitiveRootssomeproblem}
    	L.~Carlitz, ``Some problems involving primitive roots in a finite field,'' {\em
    		Proceedings of the National Academy of Sciences of the United States of
    		America}, vol.~38, no.~4, pp.~314--318, 1952.
    	
    	\bibitem{Davenport_existance}
    	H.~Davenport, ``Bases for finite fields,'' {\em Journal of the London
    		Mathematical Society}, vol.~s1-43, pp.~21--39, 01 1968.
    	
    	\bibitem{LenstraPriminormal}
    	H.~W. Lenstra and R.~J. Schoof, ``Primitive normal bases for finite fields,''
    	{\em Mathematics of Computation}, vol.~48, no.~177, pp.~217--231, 1987.
    	
    	\bibitem{Reis2019Variation}
    	G.~Kapetanakis and L.~Reis, ``Variations of the primitive normal basis
    	theorem,'' {\em Designs, Codes and Cryptography}, vol.~87, 07 2019.
    	
    	\bibitem{Mullen2013Existance}
    	S.~Huczynska, G.~L. Mullen, D.~Panario, and D.~Thomson, ``Existence and
    	properties of k-normal elements over finite fields,'' {\em Finite Fields and
    		Their Applications}, vol.~24, pp.~170--183, 2013.
    	
    	\bibitem{Lucas2}
    	L.~Reis and D.~Thomson, ``Existence of primitive 1-normal elements in finite
    	fields,'' {\em Finite Fields and Their Applications}, vol.~51, pp.~238--269,
    	2018.
    	
    	\bibitem{Neumann}
    	J.~Aguirre and V.~Neumann, ``Existence of primitive 2-normal elements in finite
    	fields,'' {\em Finite Fields and Their Applications}, vol.~73, p.~101864, 08
    	2021.
    	
    	\bibitem{RANI2_inverse}
    	M.~Rani, A.~K. Sharma, S.~K. Tiwari, and A.~Panigrahi, ``Inverses of
    	r-primitive k-normal elements over finite fields,'' {\em The Ramanujan
    		Journal}, vol.~63, 10 2023.
    	
    	\bibitem{RANI1_norm&trace}
    	M.~Rani, A.~K. Sharma, S.~K. Tiwari, and A.~Panigrahi, ``On r-primitive
    	k-normal elements with prescribed norm and trace over finite fields,'' {\em
    		Finite Fields and Their Applications}, vol.~91, p.~102253, 2023.
    	
    	\bibitem{Aguirre2_pairs_rk}
    	J.~Aguirre and V.~Neumann, ``Pairs of r-primitive and k-normal elements in
    	finite fields,'' {\em Bulletin of the Brazilian Mathematical Society, New
    		Series}, vol.~54, 04 2023.
    	
    	\bibitem{Aguirre3_AP_rk}
    	V.~G. L.~N. Josimar J. R.~Aguirre, Abílio~Lemos and S.~Ribas, ``r-primitive
    	k-normal elements in arithmetic progressions over finite fields,'' {\em
    		Communications in Algebra}, vol.~52, no.~1, pp.~427--442, 2024.
    	
    	\bibitem{Laishram_AP}
    	J.~Sharma, R.~Sarma, and S.~Laishram, ``Arithmetic progressions of r-primitive
    	elements in a field,'' {\em Boletim da Sociedade Brasileira de Matemática},
    	vol.~55, 08 2024.
    	
    	\bibitem{Carlitz1954Sum}
    	L.~Carlitz, ``Sums of primitive roots of the first and second kind in a finite
    	field,'' {\em Mathematische Nachrichten}, vol.~12, no.~3-4, pp.~155--172,
    	1954.
    	
    	\bibitem{LIDL}
    	R.~Lidl and H.~Niederreiter, {\em Finite Fields and Their Applications},
    	vol.~20 of {\em Encyclopedia Of Mathematics And Its Applications}.
    	\newblock Cambridge University Press, 1996.
    	
    	\bibitem{Reis2020additive}
    	L.~Reis, ``A note on additive characters of finite fields,'' 05 2020.
    	
    	\bibitem{alizadeh2017some}
    	M.~Alizadeh, ``Some notes on the k-normal elements and k-normal polynomials
    	over finite fields,'' {\em Journal of Algebra and its Applications}, vol.~16,
    	no.~01, p.~1750006, 2017.
    	
    	\bibitem{OrderRelated}
    	S.~Cohen, ``The orders of related elements of a finite field,'' {\em The
    		Ramanujan Journal}, vol.~7, pp.~169--183, 01 2003.
    	
    \end{thebibliography}
\end{document}